\documentclass[a4paper,11pt]{article}
\usepackage{amsmath}
\usepackage{amssymb}
\usepackage{amsthm,xspace}
\usepackage{graphicx,tikz}

%\usepackage{geometry}
%\geometry{verbose,a4paper}
%lmargin=25mm, rmargin=30mm}

%\usepackage{fullpage}

% Dominic's dependencies
\usepackage{mathrsfs}
\usepackage{bbm}
\usepackage{datetime}
\usepackage[numbers]{natbib}

\theoremstyle{definition}
\newtheorem{definition}{Definition}
\newtheorem{remark}[definition]{Remark}
\newtheorem{example}[definition]{Example}

% \newtheoremstyle{mytheorem}{0.5cm}{0.2cm}{\slshape}{ }{\bfseries}{.}{ }{}
% \theoremstyle{mytheorem}
\theoremstyle{plain}
\newtheorem{theorem}[definition]{Theorem}
\newtheorem{lemma}[definition]{Lemma}
\newtheorem{proposition}[definition]{Proposition}

\newcommand{\R}{\mathbb{R}}
\renewcommand{\SS}{\mathbb{S}}
\newcommand{\E}{{\mathbb E}}
\renewcommand{\P}{{\mathbb P}}
\newcommand{\Q}{{\mathbf Q}}
\renewcommand{\H}{{\mathbf H}}
\newcommand{\hmg}{{\mathcal H}}
\newcommand{\eps}{\varepsilon}

\newcommand{\salg}{\mathfrak{K}}
\newcommand{\dsim}{\stackrel{d}{\sim}}
\newcommand{\thf}{\frac{1}{2}}
\newcommand{\Sphere}{\mathbb{S}}
\newcommand{\Z}{\mathbb{Z}}
\newcommand{\gU}{\mathsf{U}}
\newcommand{\fb}{f_{\mathrm{b}}}
\newcommand{\fbo}{f_{\mathrm{b}}^{\mathrm{o}}}
\newcommand{\kappat}{\tilde{\kappa}}
\newcommand{\EE}{\mathbb{E}}
\DeclareMathOperator{\onev}{{\mathbf{1}}}
\newcommand{\imagi}{\boldsymbol{\imath}}
\newcommand{\HH}{\mathbb{H}}
\newcommand{\sH}{\mathcal{H}}
\newcommand{\sE}{\mathcal{E}}
\newcommand{\sI}{\mathcal{I}}
\newcommand{\sT}{\mathcal{T}}
\newcommand{\aslone}{\text{a.s\. and in } L^1 \text{ as } n \to \infty}
\newcommand{\blambda}{\boldsymbol{\lambda}}
\newcommand{\balpha}{\boldsymbol{\alpha}}

% -------------
% Dominic's macros
% -------------
\DeclareMathOperator{\dist}{dist}
\DeclareMathOperator{\poisson}{Po}
\DeclareMathOperator{\tpoisson}{\widetilde{Po}}
\DeclareMathOperator{\Exp}{Exp}
\DeclareMathOperator{\gibbs}{Gibbs}
\DeclareMathOperator{\supp}{supp}
\DeclareMathOperator*{\esssup}{ess\,sup}
\newcommand{\eqinlaw}{\text{\raisebox{0pt}[0pt][0pt]{${}\stackrel{\mathscr{D}}{=}{}$}}}
\newcommand{\dst}{\displaystlye}
\newcommand{\tst}{\textstyle}
\newcommand{\sst}{\scriptstyle}
\newcommand{\ssst}{\scriptscriptstyle}
\newcommand{\tsum}{{\tst \sum}}
\newcommand{\dd}{\mathrm{d}}
\newcommand{\bs}{\boldsymbol}
\newcommand{\BB}{\mathbb{B}}
\newcommand{\NN}{\mathbb{N}}
\newcommand{\PP}{\mathbb{P}}
\newcommand{\RR}{\mathbb{R}}
\newcommand{\Rplus}{\mathbb{R}_{+}}
\newcommand{\Zplus}{\mathbb{Z}_{+}}
\newcommand{\mcf}{\mathcal{F}}
\newcommand{\mcx}{\mathcal{X}}
\newcommand{\dtv}{d_{{\mathrm TV}}}
\newcommand{\dzero}{d}
\newcommand{\done}{d_1}
\newcommand{\dtwo}{d_2}
\newcommand{\dtwobar}{\overline{\hspace*{-1.5pt}d\hspace*{1.5pt}}\hspace*{-1.5pt}_2}
\newcommand{\msa}{\mathscr{A}}
\newcommand{\msd}{\mathscr{D}}
\newcommand{\msl}{\mathscr{L}}
\newcommand{\bphi}{\breve{\phi}}
\newcommand{\bara}{\bar{a}}
\newcommand{\barb}{\bar{b}}
\newcommand{\bard}{\bar{d}}
\newcommand{\barlam}{\bar{\lambda}}
\newcommand{\abs}[1]{\lvert #1 \rvert}
\newcommand{\bigabs}[1]{\bigl| #1 \bigr|}
\newcommand{\Bigabs}[1]{\Bigl| #1 \Bigr|}
\newcommand{\biggabs}[1]{\biggl| #1 \biggr|}
\newcommand{\Biggabs}[1]{\Biggl| #1 \Biggr|}
\newcommand{\norm}[1]{\lVert #1 \rVert}
\newcommand{\bignorm}[1]{\bigl\lVert #1 \bigr\rVert}
\newcommand{\Bignorm}[1]{\Bigl\lVert #1 \Bigr\rVert}
\newcommand{\mca}{\mathcal{A}}
\newcommand{\mcb}{\mathcal{B}}
\newcommand{\mcn}{\mathcal{N}}
\newcommand{\mcw}{\mathcal{W}}
\newcommand{\mfn}{\mathfrak{N}}
\newcommand{\tf}{\tilde{f}}
\newcommand{\tilh}{\tilde{h}}
\newcommand{\tu}{\tilde{u}}
\newcommand{\tbalpha}{\widetilde{\balpha}}
\newcommand{\tlambda}{\tilde{\lambda}}
\newcommand{\tnu}{\tilde{\nu}}
\newcommand{\txi}{\tilde{\xi}}
\newcommand{\tXXi}{\widetilde{\XXi}}
\newcommand{\tEta}{\widetilde{\Eta}}
\newcommand{\tmcx}{\widetilde{\mcx}}
\newcommand{\ftv}{\mathcal{F}_{TV}}
\newcommand{\fone}{\mathcal{F}_{1}}
\newcommand{\ftwo}{\mathcal{F}_{2}}
\newcommand{\XXi}{\Xi}
\newcommand{\ssXXi}{\sst {\Xi \hspace*{-0.25em}\rule{0.02em}{1.0ex}\hspace*{0.22em}}}
\newcommand{\Eta}{\mathrm{H}}
\newcommand{\mvert}{\, \vert \,}
\newcommand{\one}{\mathbbm{1}}
\newcommand{\RD}{\RR^D}
\newcommand{\hbit}{\hspace*{1.5pt}}
\newcommand{\nhbit}{\hspace*{-1.5pt}}
\newcommand{\Leb}{\mathrm{Leb}}
% -------------
% End of Dominic's macros
% -------------

\newcommand*{\bi}{\textbf{S}}
\newcommand*{\SSS}{{\upshape{\textbf{(S)}}}\xspace}
\newcommand*{\RS}{{\upshape{\textbf{(RS)}}}\xspace}
\newcommand*{\UB}{{\upshape{\textbf{(UB)}}}\xspace}
\newcommand*{\RC}{{\upshape{\textbf{(RC)}}}\xspace}
\newcommand*{\IR}{{\upshape{\textbf{(IR)}}}\xspace}
\DeclareMathOperator{\pip}{PIP}

\numberwithin{equation}{section}
\numberwithin{definition}{section}

\setlength{\fboxsep}{1.5ex}
\newlength{\querylen}
\setlength{\querylen}{\textwidth}
\addtolength{\querylen}{-2\fboxsep}
\usepackage{fancybox}
\newcommand{\query}[1]{\medskip \noindent
  \shadowbox{\begin{minipage}[t]{\querylen} {#1}
    \end{minipage}}\medskip}

\begin{document}

\title{Continuum percolation for Gibbs point processes}

\author{Kaspar Stucki\footnote{Current adress: Institut f\"ur Mathematische Stochastik, Goldschmidtstrasse 7,
37077 G\"ottingen, Germany}\ \footnote{Email address: kaspar@stucki.org}\\
University of Bern and University of G\"ottingen}

\date{9 August 2013}
%\date{Typeset on \today\ at \currenttime h}
%\date{15 October 2012}
\maketitle

\begin{abstract}
We consider percolation properties of the Boolean model generated by a Gibbs point process and balls with deterministic radius. We show that for a large class of Gibbs point processes there exists a critical activity, such that percolation occurs a.s.\  above criticality.

For locally stable Gibbs point processes we show a converse result, i.e.\ they do not percolate a.s.\ at low activity.
\vspace{.3 cm}

\noindent {\bf Keywords:} Gibbs point process, Percolation, Boolean model, Conditional intensity

\vspace{.3 cm}

\noindent {\bf AMS 2010 Subject Classification:} 60G55, 60K35
\end{abstract}

\section{Introduction}
\label{sec:introduction}

Let $\Xi$ be a point process in $\R^d$, $d\ge 2$, and fix a $R>0$. Consider the random set $Z_R(\Xi)=\bigcup_{x\in \Xi}\BB_R(x)$, where $\BB_R(x)$ denotes the open ball with radius $R$ around $x$. Each connected component of $Z_R(\Xi)$ is called a \emph{cluster}.
We say that $\Xi$ \emph{percolates} (or \emph{$R$-percolates}) if $Z_R(\Xi)$ contains with positive probability an infinite cluster. In the terminology of \cite{mr96} this is a Boolean percolation model driven by $\Xi$ and deterministic radius distribution $R$.

It is well-known that for Poisson processes there exists a critical intensity $\beta_c$ such that a Poisson processes with intensity $\beta>\beta_c$ percolates a.s.\ and if $\beta <\beta_c$ there is a.s.\ no percolation, see e.g.\ \cite{zs85}, or \cite{penrose91} for a more general Poisson percolation model. 
 
For Gibbs point processes the situation is less clear. 
In \cite{PY09} it is shown that for some two-dimensional pairwise interacting models with an attractive tail, percolation occurs if the activity parameter is large enough, see also \citep{Aristoff12} for a similar result concerning the Strauss hard core process in two dimensions.
There is a related work \cite{jansen12}, which states conditions on the intensity instead of the activity. Furthermore in \cite{muermann75,zessin08} it is shown that for finite-range pairwise interacting model there is no percolation at low activity.

The aim of this work is to extend the results of \cite{PY09,Aristoff12} to any dimension $d\ge 2$ and to very general Gibbs point processes. We give a percolation condition on the conditional intensity of a Gibbs process, which is easily understandable and which is satisfied for most Gibbs processes, provided the percolation radius $R$ is large enough. Our main result on percolation, Theorem~\ref{thm:perc} then states, that there exists a critical activity, such that the Gibbs point processes percolate a.s.\ above criticality. The main idea of the proof is the contour method from statistical physics.

Furthermore we state a result on the absence of percolation for locally stable Gibbs processes. Here the idea of the proof is that a locally stable Gibbs process can be dominated by a Poisson process, and then we use the percolation results available for Poisson processes.

The plan of the paper is as follows. In Section~\ref{sec:pre} we introduce the Gibbsian formalism and give the necessary notations. Our main results are stated in Section~\ref{sec:main_results}, and in Section~\ref{sec:pip} they are applied to pairwise interaction processes. Finally, Section~\ref{sec:proofs} contains the proofs of our main results.

\section{Preliminaries}
\label{sec:pre}

Our state space is $\R^d$, for some $d\ge 2$, with the Borel $\sigma$-algebra. Let $\abs{\Lambda}$ denote the Lebesgue measure of a measurable $\Lambda\subset \R^d$, and let $\alpha_d=\abs{\BB_1(0)}$ be the volume of the unit ball.
For two  measurable sets $\Lambda,\Lambda'\subset \R^d$ denote $\mathrm{dist}(\Lambda,\Lambda')=\inf_{x\in \Lambda, \, y\in \Lambda'}\norm{x-y}$, where $\norm{\cdot}$ denotes the Euclidean norm. If $\Lambda=\{x\}$ we write $\mathrm{dist}(x,\Lambda')$ instead of $\mathrm{dist}\big(\{x\},\Lambda'\big)$.
 
Let $(\mfn,\mcn)$ denote the space of locally finite point measures on $\R^d$ equipped with the $\sigma$-algebra generated by the evaluation maps $[\mfn\ni\xi\mapsto \xi(\Lambda) \in \Zplus]$ for bounded Borel sets $
\Lambda\subset \R^d$. A point process is just a $\mfn$-valued random element. We assume the point processes to be \emph{simple}, i.e.\ do not allow multi-points. Thus we can use set notation, e.g.\ $x\in \xi$ means that the point $x$ lies in the support of the measure $\xi$ and $\abs{\xi}=\xi(\R^d)$ denotes the total number of points in $\xi$. For a measurable $\Lambda\subset \R^d$ let $(\mfn_\Lambda,\mcn_\Lambda)$ be the space of locally finite point measures restricted to $\Lambda$ and its canonical $\sigma$-algebra, respectively. Denote $\xi|_\Lambda$ for the restriction of a point configuration $\xi\in \mfn$ onto $\Lambda$.

To define Gibbs processes on $\R^d$ we use the \emph{Dobrushin-Lanford-Ruelle}-approach of local specifications, see e.g.\ \cite{georgii88, nguyenzessin79,ruelle69}. Fix a bounded measurable $\Lambda\subset \R^d$ and a configuration $\omega\in \mfn_{\Lambda^\mathsf{c}}$, called the \emph{boundary condition}, where  $\Lambda^{\mathsf{c}}=\R^d\setminus \Lambda$.
Furthermore fix a measurable function $\Phi\colon \mfn\to \R\cup\{\infty\}$, called the \emph{potential}. For a $\xi \in \mfn_\Lambda$ let
\begin{equation}
\label{eq:u}
u_{\Lambda,\omega}(\xi)=\exp\bigg( -\sum_{\xi'\subset \xi,\, \xi'\neq \emptyset,\, \omega'\subset \omega}\Phi \big(\xi'\cup\omega'\big)\bigg).
\end{equation} 
Define a probability measure on $\mfn$ by
\begin{align}
\label{eq:loc-spez}
\mu_{\Lambda,\omega}(A)=
\frac{1}{c_{\Lambda,\omega}}\bigg(\one\{\omega\in A\} +& \sum_{k=1}^\infty\frac{1}{k!} \int_\Lambda\cdots\int_\Lambda \one\big\{\{x_1,\dots,x_k\}\cup\omega\in A\big\}\times \nonumber \\
 &  u_{\Lambda,\omega}\big(\{x_1,\dots,x_k\}\big)\;dx_1\cdots dx_k\bigg),
\end{align}
where 
\begin{equation}
\label{eq:pf-def}
c_{\Lambda,\omega}=1+\sum_{k=1}^\infty\frac{1}{k!}\int_\Lambda\cdots\int_\Lambda u_{\Lambda,\omega}\big(\{x_1,\dots,x_k\}\big)\; dx_1\cdots dx_k
\end{equation}
is called the \emph{partition function}. Note that the sum in \eqref{eq:u} and the partition function may not exist. However we tacitly assume that the potential $\Phi$ is chosen such that $\mu_{\Lambda,\omega}$ is well-defined for all bounded measurable $\Lambda\subset \R^d$ and for all $\omega \in \mfn_{\Lambda^{\mathsf{c}}}$; we refer the reader to \cite{ruelle69} or \cite{ddg12} for such and related questions. A probability measure $\mu$ on $\mfn$ is called a \emph{Gibbs measure}, if it satisfies the Dobrushin--Lanford--Ruelle equation
\begin{equation}
\label{eq:dlr}
\mu(A)=\int_{\mfn}\mu_{\Lambda,\xi|_{\Lambda^\mathsf{c}}}(A)\; \mu(d\xi),
\end{equation}
for all $A\in \mcn$ and for all bounded measurable $\Lambda\subset \R^d$. Let $\mathcal{G}(\Phi)$ denote the set of all Gibbs measures corresponding to the potential $\Phi$. It may happen that $\mathcal{G}$ contains more than one Gibbs measure; such an event is called a \emph{phase transition}. A point process $\Xi$ is a \emph{Gibbs point process} with potential $\Phi$ if it has a distribution $\mu\in \mathcal{G}(\Phi)$. The measures in \eqref{eq:loc-spez} are the \emph{local specifications} of $\mu$. They are nothing else than conditional probabilities, i.e.\ if $\Xi\sim \mu$, then we have for all bounded measurable $\Lambda\subset \R^d$, for all boundary conditions $\omega\in \mfn_{\Lambda^\mathsf{c}}$ and for all $A\in \mcn$ 
\begin{equation*}
\P(\Xi\in A\mid \Xi|_{\Lambda^\mathsf{c}}=\omega)=\mu_{\Lambda,\omega}(A).
\end{equation*}
For $x\in \R^d$ and $\xi\in \mfn$ define the \emph{conditional intensity} $\lambda$, see \cite{nguyenzessin79}, as
\begin{equation}
\label{eq:ci-def}
\lambda(x\mid \xi)=\exp\Big(-\sum_{\xi'\subset \xi}\Phi\big(\{x\}\cup \xi'\big)\Big).
\end{equation} 
Fix a bounded domain $\Lambda\subset \R^d$ and a boundary condition $\omega\in \mfn_{\Lambda^\mathsf{c}}$. Then we get from \eqref{eq:u}
\begin{equation}
\label{eq:ci-def2}
\lambda(x\mid \xi\cup\omega)=\frac{u_{\Lambda,\omega}\big(\{x\}\cup\xi\big)}{u_{\Lambda,\omega}\big(\xi\big)},
\end{equation}
for all $x \in \Lambda$ and for all $\xi \in \mfn_\Lambda$ such that $x\notin \xi$ and $u_{\Lambda,\omega}(\xi)>0$. Let $\mu\in \mathcal{G}(\Phi)$ and define $N_\Lambda=\{\xi\in \mfn \colon u_{\Lambda, \xi|_{\Lambda^\mathsf{c}}}(\xi|_\Lambda)=0\}$. Obviously $\mu_{\Lambda,\omega}(N_\Lambda)=0$ for all $\omega\in \mfn_{\Lambda^{\mathsf{c}}}$, and by \eqref{eq:dlr} we get $\mu(N_\Lambda)=0$ as well. Thus \eqref{eq:ci-def2} holds for $\mu$-a.e.\ $\xi\in \mfn$ with $x\notin \xi$.

Note that if we are interested in Gibbs point processes on a bounded domain with empty boundary condition ($\omega=\emptyset$), Equation~\eqref{eq:ci-def2} coincides, up to the null-set $N_\Lambda$, with the definition of the conditional intensity commonly used in spatial statistics, see e.g.\ \cite[Def.~6.1]{moellerwaage04}. 
Roughly speaking, the conditional intensity is the infinitesimal probability that $\Xi$ has a point at $x$, given that $\Xi$ coincides with the configuration $\xi$ everywhere else. 

For the rest of this paper we assume the potential $\Phi$ to be constant for one-point configurations, i.e.\ $\Phi\big(\{x\}\big)=\Phi\big(\{0\}\big)$ for all $x \in \R^d$. Let $\beta=\exp\big(-\Phi\big(\{0\}\big)\big)$ and for $x\in \R^d$ and $\xi \in \mfn$ denote
\begin{equation*}
\tilde{\lambda}(x\mid \xi)=\frac{\lambda(x\mid \xi)}{\beta}=
\exp\Big(-\sum_{\xi'\subset \xi,\, \xi'\neq\emptyset}\Phi\big(\{x\}\cup \xi'\big)\Big).
\end{equation*}
The constant $\beta$ is called \emph{activity parameter} or simply \emph{activity}. To be able to keep track of $\beta$ in our main results, we will mainly use the notation $\beta \tilde{\lambda}$ for the conditional intensity.

In order to describe Gibbs processes by the DLR-approach, one can equivalently characterize them through the conditional intensity, see \cite{nguyenzessin79}. Therefore, denote $\mathcal{G}(\beta,\tilde{\lambda})$ as the set of Gibbs measures corresponding to the conditional intensity $\beta \tilde{\lambda}$.

A Gibbs point process $\Xi\sim \mu\in \mathcal{G}(\Phi)$ is called a \emph{pairwise interaction process} if for every configuration $\xi\in \mfn$ with $\abs{\xi}\ge 3$ we have $\Phi(\xi)=0$. By denoting $\varphi(x,y)=\exp\big(-\Phi\big(\{x,y\}\big)\big)$ the conditional intensity simplifies to
\begin{equation*}
\lambda(x\mid \xi)=\beta\prod_{y\in \xi}\varphi(x,y),
\end{equation*}
for all $x\in \R^d$ and for all $\xi \in \mfn$, with $x\notin \xi$.
With a slight abuse of notation, we shall use $\mathcal{G}(\beta,\varphi)$ for the set of the corresponding Gibbs measures.

One of the most important point processes is surely the \emph{Poisson process}. Here, we concentrate only on homogeneous Poisson processes, which can be defined as pairwise interaction processes with $\varphi\equiv 1$. For Poisson processes the parameter $\beta$ is called \emph{intensity}. The more common definition, however, is the following. A point process $\Pi$ is called a Poisson process with intensity $\beta$ if for bounded and pairwise disjoint sets $\Lambda_1,\dots,\Lambda_n$ the random variables $\Pi(\Lambda_1),\dots,\Pi(\Lambda_n)$ are independent and Poisson distributed with mean $\beta \abs{\Lambda_i}$, for $i=1,\dots,n$. It is an easy exercise to show the equivalence of the two definitions using \eqref{eq:loc-spez}.

For Poisson processes we have the following percolation result, see \cite{zs85}, or also \cite[Sec.~12.10]{grimmett99}.

\begin{proposition}
\label{prop:poisson}
Let $\Pi$ be a Poisson process with intensity $\beta$. For all $R>0$ there exists a critical intensity $0<\beta_c<\infty$ such that $Z_R(\Pi)$ contains a.s.\ only finite clusters if $\beta <\beta_c$ and $Z_R(\Pi)$ has an infinite cluster a.s.\ if $\beta>\beta_c$.
\end{proposition}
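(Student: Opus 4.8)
The plan is to characterize $\beta_c$ as a threshold for the percolation probability $\theta(\beta) = \PP(\text{$Z_R(\Pi)$ has an infinite cluster})$ and then to bound it on both sides. First I would record two soft facts. \emph{Monotonicity:} for $\beta_1 < \beta_2$ a Poisson process of intensity $\beta_2$ is the superposition of an intensity-$\beta_1$ process and an independent intensity-$(\beta_2-\beta_1)$ process, so the two can be coupled with $\Pi_{\beta_1}\subset\Pi_{\beta_2}$ and hence $Z_R(\Pi_{\beta_1})\subset Z_R(\Pi_{\beta_2})$; thus $\theta$ is nondecreasing. \emph{Zero--one law:} the event that an infinite cluster exists is invariant under the translations of $\RR^d$, and the Poisson process is ergodic (indeed mixing) under this group, so $\theta(\beta)\in\{0,1\}$ for every $\beta$. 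Setting $\beta_c=\inf\{\beta>0 : \theta(\beta)=1\}$, monotonicity gives $\theta(\beta)=0$ for $\beta<\beta_c$ (only finite clusters a.s.) and $\theta(\beta)=1$ for $\beta>\beta_c$ (an infinite cluster a.s.). It then remains to prove $0<\beta_c<\infty$.

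For the lower bound $\beta_c>0$ I would dominate a cluster by a subcritical branching process. Two balls $\BB_R(x),\BB_R(y)$ overlap exactly when $\norm{x-y}<2R$, so I explore the cluster of a point placed at the origin by successively adjoining points of $\Pi$ lying within distance $2R$ of already-discovered points. Since the number of points of $\Pi$ in any ball of radius $2R$ is Poisson with mean $\beta\alpha_d(2R)^d$, and the exploration never removes points already found, the total number of points in the cluster is stochastically dominated by the total progeny of a Galton--Watson process with $\mathrm{Po}(\beta\alpha_d(2R)^d)$ offspring. When $\beta\alpha_d(2R)^d<1$ this process is subcritical and dies out a.s., so the cluster is a.s.\ finite; hence $\theta(\beta)=0$ and $\beta_c\ge(\alpha_d(2R)^d)^{-1}>0$.

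For the upper bound $\beta_c<\infty$ I would use a coarse-graining comparison with Bernoulli site percolation on $\Z^d$. Partition $\RR^d$ into cubes of side $\ell$, with $\ell$ chosen small enough (depending only on $d$ and $R$, e.g.\ $\ell\sqrt{d+3}<2R$) that any two points lying in the same cube or in two face-adjacent cubes are at distance less than $2R$, so their balls overlap. Declare a cube \emph{open} if it contains at least one point of $\Pi$; since disjoint cubes carry independent Poisson counts, the cubes are open independently with probability $p=1-e^{-\beta\ell^d}$. An infinite nearest-neighbour path of open cubes yields, upon selecting one point per cube, an infinite chain of overlapping balls and hence an infinite cluster of $Z_R(\Pi)$. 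As $\beta\to\infty$ we have $p\to 1$, so for $\beta$ large enough $p$ exceeds the site-percolation threshold $p^{\mathrm{site}}_c(\Z^d)<1$; then an infinite open cluster exists a.s., giving $\theta(\beta)=1$ and $\beta_c<\infty$.

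I expect the main obstacle to be the upper bound: unlike the branching domination, which is essentially self-contained, the supercritical step rests on the nondegeneracy $p^{\mathrm{site}}_c(\Z^d)<1$ of discrete percolation and on matching the continuum and lattice scales so that discrete connectivity rigorously forces continuum connectivity. The dependence on $R$ is in any case inessential: the scaling $x\mapsto cx$ sends an intensity-$\beta$, radius-$R$ model to an intensity-$\beta c^{-d}$, radius-$cR$ model, so it would suffice to treat a single normalized radius.
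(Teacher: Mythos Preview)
Your argument is correct and follows the standard route to this classical result. Note, however, that the paper does not actually prove Proposition~\ref{prop:poisson}: it is quoted as a known fact with references to \cite{zs85} and \cite[Sec.~12.10]{grimmett99}, and is then used as a black box in the proof of Theorem~\ref{thm:non-perc}. So there is no ``paper's own proof'' to compare against; what you have written is essentially the textbook proof one finds in those references (monotone coupling plus ergodicity to define $\beta_c$, branching-process domination for $\beta_c>0$, and coarse-graining to supercritical Bernoulli site percolation for $\beta_c<\infty$).

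One small remark on the lower-bound step: the Galton--Watson domination is cleanest if you first add a point at the origin (i.e.\ work under the Palm distribution, which by Slivnyak's theorem is just $\Pi\cup\{0\}$) and then explore outward; as stated, the phrase ``the exploration never removes points already found'' is slightly imprecise as a justification for the stochastic domination, since the real reason is that the unexplored region only shrinks, so the number of new neighbours at each step is dominated by a fresh $\mathrm{Po}(\beta\alpha_d(2R)^d)$ variable independent of the past. This is a cosmetic point and does not affect the validity of your argument.
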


\section{Main results}
\label{sec:main_results}

For our main result on percolation we need the following definition. Let $\Xi\sim\mu\in \mathcal{G}(\beta,\tilde{\lambda})$. We say $\Xi$ satisfies \emph{condition {\bf(P)}} with  constants $r,\delta>0$ if 
\begin{equation*}
\tilde{\lambda}(x\mid\xi)\ge \delta \quad \text{for all } x\in \R^d \text{ and for $\mu$-a.e.\ } \xi \in \mfn \text{ with } \mathrm{dist}(x,\xi)\ge r.
\end{equation*}
If for all $\beta>0$, all Gibbs processes with distribution in $\mathcal{G}(\beta,\tilde{\lambda})$ satisfy condition \textbf{(P)}, then we say that $\tilde{\lambda}$ satisfies condition {\bf(P)} itself.

%for all $x\in \R^d$ and for $\mu$-a.e.\ $\xi \in \mfn$ with $\mathrm{dist}(x,\xi)\ge r$ we have $\tilde{\lambda}(x\mid\xi)\ge \delta$.

A physical interpretation of condition \textbf{(P)} could be the following. Let $\xi\in \mfn$ and choose a $x\in \R^d$ such that its nearest point in $\xi$ is at a distance at least $r$. Assume that the point process $\Xi$ coincides with $\xi$ everywhere except at $x$. Then the condition \textbf{(P)} states that the \emph{energy cost} of adding a point at $x$ is bounded from above by an universal constant which does not depend on the location $x$ nor on the configuration $\xi$.  

The next theorem is our main result on percolation.

\begin{theorem}
\label{thm:perc}
Let $\tilde{\lambda}$ satisfy condition \textbf{(P)} with constants $r$ and $\delta$. Then for all $R>r$ there exist a $\beta_+<\infty$ such that for all $\beta > \beta_+$ and for all Gibbs processes $\Xi$ with distribution in $\mathcal{G}(\beta,\tilde{\lambda})$, the set $Z_R(\Xi)$ contains a.s.\ an infinite cluster. 
\end{theorem}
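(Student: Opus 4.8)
The plan is to use the contour method (Peierls argument) from statistical physics, as the author announced. The key idea: discretize space into a lattice of boxes, declare each box "good" if it contains a point of $\Xi$, and show that under condition \textbf{(P)} with large activity $\beta$, the good boxes percolate in a lattice sense, which in turn forces $Z_R(\Xi)$ to contain an infinite cluster.

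First I would set up the renormalization. Fix $R>r$ and choose a box side-length $a>0$ small enough (roughly $a \le (R-r)/\sqrt{d}$ or similar) so that two points lying in adjacent boxes are within distance $R$ of each other, guaranteeing their balls $\BB_R$ overlap; this ensures that a $*$-connected path of occupied boxes yields a connected subset of $Z_R(\Xi)$. Tile $\R^d$ by the cubes $Q_z = a(z + [0,1)^d)$ for $z \in \Z^d$, and call $z$ \emph{occupied} if $\Xi(Q_z) \ge 1$. The goal becomes: show site percolation of the occupied sites on $\Z^d$ with the $*$-connectivity (nearest-neighbour suffices if $a$ is chosen conservatively), and then transfer this to continuum percolation of $Z_R(\Xi)$.

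Next I would carry out the Peierls estimate via a \emph{stochastic domination / conditional probability} bound. The crucial step uses condition \textbf{(P)} together with the DLR equation \eqref{eq:dlr} and the local specification \eqref{eq:loc-spez}. I would bound from below, uniformly over any boundary configuration $\omega$, the conditional probability that a given box $Q_z$ is occupied. Concretely, condition on $\Xi|_{Q_z^\mathsf{c}} = \omega$; then $\Xi|_{Q_z}$ has law $\mu_{Q_z,\omega}$, and the probability of at least one point in $Q_z$ can be estimated using the $k=1$ term of \eqref{eq:loc-spez}. Since $\tilde\lambda(x \mid \xi) \ge \delta$ whenever $\mathrm{dist}(x,\xi)\ge r$, and since $a$ is small, I would restrict to the event that the single point $x$ lies in a central sub-cube of $Q_z$ at distance $\ge r$ from all of $\omega$ outside a controlled region — here one must handle the fact that $\omega$ may have points near $Q_z$. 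The cleanest route is: the probability of a box being \emph{empty} is at most $1/c_{Q_z,\omega} \le \exp(-\beta\delta \abs{Q_z'})$ for a suitable inner cube $Q_z'$, so as $\beta \to \infty$ this empty-probability tends to $0$ uniformly in $\omega$. This gives a uniform lower bound $p(\beta) \to 1$ on the conditional occupation probability, valid regardless of the states of other boxes.

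The main obstacle, and the point requiring care, is that the occupied-site field is \emph{not} independent across boxes — it is only a dependent field with uniformly bounded conditional probabilities of the above form. The standard remedy is a comparison-with-independence theorem for dependent percolation: a field in which each site is occupied with conditional probability at least $p(\beta)$ given the configuration outside (a condition often phrased via the Liggett–Schonmann–Stacey lemma, or directly via a Peierls contour count) stochastically dominates an independent Bernoulli$(p')$ site percolation with $p' \to 1$ as $\beta \to \infty$. I would then invoke that for $p'$ close enough to $1$, independent site percolation on $\Z^d$ (with the chosen connectivity) has an infinite cluster a.s., which is classical. Choosing $\beta_+$ so large that $p'(\beta_+)$ exceeds the relevant site-percolation threshold completes the argument. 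The technical heart is making the uniform-in-$\omega$ lower bound honest: one must argue that the inner cube $Q_z'$ where a lone added point sits at distance $\ge r$ from \emph{every} possible boundary point still has positive volume after accounting for boundary points that could lie just outside $Q_z$, which is where the freedom to shrink $a$ and the hypothesis $R>r$ (ensuring room to spare) are used.
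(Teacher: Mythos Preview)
Your overall architecture (discretize, Peierls) matches the paper, but the step you flag as ``the technical heart'' is a genuine gap, and it cannot be closed along the lines you suggest. You want a uniform-in-$\omega$ lower bound on $\P(\Xi(Q_z)\ge 1 \mid \Xi|_{Q_z^{\mathsf c}}=\omega)$ by finding an inner cube $Q_z'\subset Q_z$ at distance $\ge r$ from $\omega$. But $\omega$ is arbitrary: it may place points right against the boundary of $Q_z$, and since you have taken $a\le (R-r)/\sqrt d$, which is typically much smaller than $r$, a single point of $\omega$ just outside $Q_z$ already has its $r$-ball covering all of $Q_z$. Then $Q_z'=\emptyset$ and condition~\textbf{(P)} gives you nothing; for a hard-core model this conditional occupation probability can literally be zero. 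Shrinking $a$ makes this worse, not better, so neither the ``freedom to shrink $a$'' nor the hypothesis $R>r$ rescues the bound, and hence no Liggett--Schonmann--Stacey comparison is available.

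The paper circumvents this by changing what ``bad'' means and by working with an entire contour at once rather than box by box. A cube $Q_z$ is bad not when it is empty, but when $\mathrm{dist}(\Xi,Q_z)\ge r$; equivalently, the relevant event for a contour $S$ is $A_{S,r}=\{\mathrm{dist}(\Xi,\bigcup_{z\in S}Q_z)\ge r\}$. On this event every point of $\Xi$ is automatically at distance $\ge r$ from every cube of $S$, so condition~\textbf{(P)} becomes applicable. One then extracts (by a simple packing argument) a sub-collection $S'\subset S$ with $|S'|\ge c|S|$ whose cubes are pairwise at distance $\ge r$, and inserts one point into each cube of $S'$ sequentially: at each insertion the new point is at distance $\ge r$ both from $\Xi$ (because of $A_{S,r}$) and from the previously inserted points (because the cubes of $S'$ are well separated). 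This yields $u_{\Lambda,\omega}(\xi\cup\{x_1,\dots,x_n\})\ge (\beta\delta)^n u_{\Lambda,\omega}(\xi)$ and hence $\mu_{\Lambda,\omega}(A_{S,r})\le(\beta\delta/m^d)^{-c|S|}$ uniformly in $\omega$. The Peierls sum over contours then goes through. The two missing ideas in your proposal are thus (i) the redefinition of ``bad'' in terms of distance $\ge r$ rather than emptiness, and (ii) the well-separated sub-collection that lets you apply \textbf{(P)} iteratively along the whole contour.
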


\begin{remark}
\label{rem:subspace}
In fact, the proof of Theorem~\ref{thm:perc} yields a slightly  stronger percolation result. Let $\Xi$ be as in Theorem~\ref{thm:perc}, and let $\mathbb{H}\subset \R^d$ be any affine subspace of dimension at least two. Then $Z_R(\Xi)\cap \mathbb{H}$ contains a.s.\ an infinite cluster.
\end{remark}

For our result on non-percolation we need the following stability assumption. Let $\Xi$ be a Gibbs process with distribution $\mu$ and conditional intensity $\lambda$. Then $\Xi$ is called \emph{locally stable} if there exists a constant $c^*$ such that $\lambda(x\mid \xi)\le c^*$ for all $x\in \R^d$ and for $\mu$-a.e.\ $\xi \in \mfn$. If all Gibbs point processes corresponding to the conditional intensity $\lambda$ are locally stable, we call $\lambda$ itself locally stable. Most Gibbs point processes considered in spatial statistics are locally stable, see \cite[p.~84]{moellerwaage04} and \cite[p.~850]{km00}. However the most important example in statistical physics, the \emph{Lennard--Jones process}, is not locally stable.

\begin{theorem}
\label{thm:non-perc}
Let $\beta \tilde{\lambda}$ be locally stable. Then for all $R>0$ there exists a $\beta_->0$ such that for all $\beta < \beta_-$ and for all Gibbs processes $\Xi$ with distribution in $\mathcal{G}(\beta,\tilde{\lambda})$, the set $Z_R(\Xi)$ contains a.s.\ only finite clusters. 
\end{theorem}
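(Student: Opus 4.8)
The plan is to dominate the Gibbs process stochastically by a homogeneous Poisson process and then invoke the Poisson percolation result, Proposition~\ref{prop:poisson}. The structural fact driving everything is that local stability yields a uniform upper bound on the conditional (Papangelou) intensity, and a point process whose conditional intensity is bounded above by a constant is stochastically dominated, with respect to increasing events, by a Poisson process of that intensity. Since percolation is an increasing event, non-percolation of the dominating Poisson process forces non-percolation of $\Xi$.

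First I would make the domination precise. Local stability of $\beta\tilde{\lambda}$ provides a constant bounding $\lambda=\beta\tilde{\lambda}$; equivalently $\tilde{\lambda}(x\mid\xi)\le \tilde c$ for all $x\in\R^d$ and $\mu$-a.e.\ $\xi\in\mfn$, with $\tilde c$ independent of $\beta$, so at activity $\beta$ we have $\beta\tilde{\lambda}(x\mid\xi)\le \beta\tilde c=:c^*$. The key step is then to establish $\Xi\preceq\Pi_{c^*}$, where $\Pi_{c^*}$ is a Poisson process of intensity $c^*$ and $\preceq$ denotes stochastic domination for the partial order on $\mfn$ induced by set inclusion, so that increasing events are exactly those stable under the addition of points. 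This is the content of the stochastic comparison theorems for point processes with bounded conditional intensity, and I would invoke the relevant coupling result (e.g.\ Georgii--K\"uneth). Conceptually the domination can be realised through a dependent thinning of $\Pi_{c^*}$ in which a proposed point at $x$ is retained with probability $\beta\tilde{\lambda}(x\mid\cdot)/c^*\le 1$, or through a spatial birth-and-death dynamics; the bounded-intensity hypothesis is precisely what renders these acceptance probabilities admissible. Crucially, this must be deduced from the bound on $\lambda$ alone, uniformly over all (possibly non-unique) measures in $\mathcal{G}(\beta,\tilde{\lambda})$.

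Next I would record the monotonicity and conclude. The event $\{Z_R(\Xi)\text{ has an infinite cluster}\}$ is increasing, because enlarging the configuration can only enlarge $Z_R$ and hence can only create, never destroy, an infinite cluster. Stochastic domination therefore gives
\[
\P\big(Z_R(\Xi)\text{ has an infinite cluster}\big)\le \P\big(Z_R(\Pi_{c^*})\text{ has an infinite cluster}\big).
\]
By Proposition~\ref{prop:poisson} there is a critical intensity $\beta_c=\beta_c(R)\in(0,\infty)$ below which the Poisson Boolean model has a.s.\ only finite clusters. Setting $\beta_-=\beta_c/\tilde c$, every $\beta<\beta_-$ gives $c^*=\beta\tilde c<\beta_c$, so the right-hand side vanishes and $Z_R(\Xi)$ has a.s.\ only finite clusters.

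The main obstacle is the first step: obtaining the stochastic domination rigorously from nothing more than the pointwise bound on the conditional intensity, and doing so uniformly over the whole set $\mathcal{G}(\beta,\tilde{\lambda})$. The comparison theorem for Papangelou intensities is the engine of the argument, and once it is in hand the monotonicity and the choice of $\beta_-$ are routine.
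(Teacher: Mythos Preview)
Your proposal is correct and follows essentially the same strategy as the paper: bound the conditional intensity by a constant, dominate $\Xi$ by a Poisson process of that intensity, and then invoke Proposition~\ref{prop:poisson}. The only notable difference is in how the domination is justified. You appeal directly to a global stochastic comparison theorem (Georgii--K\"uneth) for point processes with bounded Papangelou intensity and then use that percolation is an increasing event. The paper instead builds the coupling by hand: on each box $\Lambda_n$ it realises the conditional law of $\Xi|_{\Lambda_n}$ as a dependent thinning of a Poisson process of intensity $c^*$ (citing Kendall--M{\o}ller), and then passes to a weak limit to obtain $\Xi'\subset\Pi$ a.s.\ on all of $\R^d$. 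Your route is a little cleaner, since it avoids the limiting argument; the paper's route is more self-contained, since the finite-volume thinning is elementary. Either way the substance of the argument, and the choice $\beta_-=\beta_c/\tilde c$, is the same.
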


\begin{remark}
The proof of Theorem~\ref{thm:non-perc} relies on the fact that a locally stable Gibbs process can be dominated by a Poisson process, and then Proposition~\ref{prop:poisson} is applied. However for Poisson processes there are more general percolation results available, e.g.\ the balls could be replaced by some random geometric objects. Thus Theorem~\ref{thm:non-perc} may be generalized along the lines of \cite{penrose91}.
\end{remark}

The next example combines Theorem~\ref{thm:perc} and Theorem~\ref{thm:non-perc} to get a similar result as in Proposition~\ref{prop:poisson}.

\begin{example}
\label{ex:aip}
A Gibbs point process is called an \emph{area interaction process}, see \cite{bv95}, if its conditional intensity is given by
\begin{equation*}
\lambda(x\mid \xi)=\beta \gamma^{-\abs{\BB_{r_0}(x)\setminus \cup_{y\in \xi}\BB_{r_0}(y)}}
\end{equation*}
for some $\gamma,r_0>0$. One easily gets the estimates
\begin{align*}
1\le \tilde{\lambda}(x\mid \xi) \le \gamma^{-\alpha_d{r_0}^d} \quad &\text{if} \quad 0\le \gamma \le 1,\\
\gamma^{-\alpha_d{r_0}^d}\le \tilde{\lambda}(x\mid \xi) \le 1 \quad \quad \quad \; &\text{if} \quad  \gamma \ge 1.
\end{align*}
Thus $\lambda$ satisfies condition \textbf{(P)} for all $r>0$ with $\delta=\min\{1,\gamma^{-\alpha_d{r_0}^d}\}$ and it is locally stable with constant $c^*=\beta\max\{1,\gamma^{-\alpha_d{r_0}^d}\}$. Then the Theorems~\ref{thm:perc} and \ref{thm:non-perc} yield that for all $R>0$ there exists two constants $0<\beta_-\le \beta_+<\infty$ such that all area interaction processes with $\beta < \beta_-$ do not $R$-percolate a.s.\ and all area interaction processes with $\beta > \beta_+$ do $R$-percolate a.s. It remains open whether $\beta_-=\beta_+$, as for the Poisson process, see Proposition~\ref{prop:poisson}.
\end{example}

\section{Pairwise interaction processes}
\label{sec:pip}

In this section we consider pairwise interaction processes and compare the results of \cite{PY09} with ours. The next proposition shows which pairwise interaction processes  satisfy condition \textbf{(P)}.

\begin{proposition}
\label{prop:pip}
Let $r>0$. Assume that the interaction function $\varphi$ satisfies one of the following conditions.
\begin{itemize}
\item[(i)] For all $x,y\in \R^d$ with $\norm{x-y}\ge r$ we have $\varphi(x,y)\ge 1$.
\item[(ii)] There exist constants  $\tilde{\delta}>0$ and  $r_{\max}<\infty$ such that
\begin{align*}
\varphi(x,y)=0 \quad &\text{if} \quad 0\le \norm{x-y}<r,\\
\varphi(x,y)\ge \tilde{\delta} \quad &\text{if} \quad r\le \norm{x-y} < r_{\max},\\
\varphi(x,y)\ge 1 \quad &\text{if} \quad r_{\max} \le \norm{x-y}.
\end{align*}
\end{itemize}
Then there exists a $\delta>0$ such that all Gibbs processes with distribution in $\mathcal{G}(\beta,\varphi)$ satisfy  condition \textbf{(P)} with constants $r$ and $\delta$.
\end{proposition}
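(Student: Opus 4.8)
The plan is to work directly with the explicit form of the conditional intensity for pairwise interaction processes, $\tilde{\lambda}(x\mid\xi)=\prod_{y\in\xi}\varphi(x,y)$, and to bound this product from below for every $x\in\R^d$ and every relevant $\xi\in\mfn$ with $\dist(x,\xi)\ge r$. The two hypotheses are treated separately, with case (ii) carrying all the real content.

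\textbf{Case (i).} This is immediate and needs no measure-theoretic input. If $\dist(x,\xi)\ge r$, then every $y\in\xi$ satisfies $\norm{x-y}\ge r$, so by hypothesis each factor obeys $\varphi(x,y)\ge 1$; hence the product is $\ge 1$ and condition \textbf{(P)} holds with $\delta=1$.

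\textbf{Case (ii).} First I would record that the hard-core part of the interaction forces $\mu$-a.e.\ configuration to consist of points that are mutually at distance at least $r$. Indeed, any configuration containing two points closer than $r$ produces a factor $\varphi=0$ in the weight $u_{\Lambda,\omega}$, so it lies in the null set $N_\Lambda$ for every bounded $\Lambda$ containing both points; covering $\R^d$ by the increasing sequence of balls $\BB_n(0)$ and using $\mu(N_{\BB_n(0)})=0$ gives that $\mu$-a.e.\ $\xi$ has all of its points pairwise $\ge r$ apart. Now fix such a typical $\xi$ and an $x$ with $\dist(x,\xi)\ge r$, and assume without loss of generality that $\tilde{\delta}\le 1$. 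Splitting the points of $\xi$ by distance from $x$, those with $\norm{x-y}\ge r_{\max}$ contribute factors $\ge 1$, while those with $r\le\norm{x-y}<r_{\max}$ contribute factors $\ge\tilde{\delta}$ (there are no points at distance $<r$). Writing $N$ for the number of points of the latter type,
\[
\tilde{\lambda}(x\mid\xi)=\prod_{y\in\xi}\varphi(x,y)\ge \tilde{\delta}^{\,N}.
\]

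The key step is to bound $N$ uniformly. The relevant points all lie in $\BB_{r_{\max}}(x)$ and, by the separation property above, are pairwise at distance $\ge r$; placing disjoint balls of radius $r/2$ about each and comparing Lebesgue measures gives
\[
N\le \frac{\abs{\BB_{r_{\max}+r/2}(x)}}{\abs{\BB_{r/2}(0)}}=\Big(\frac{2r_{\max}+r}{r}\Big)^{d}=:N_{\max},
\]
a constant depending only on $r$, $r_{\max}$ and $d$. Hence $\tilde{\lambda}(x\mid\xi)\ge \tilde{\delta}^{\,N_{\max}}=:\delta>0$ uniformly in $x$ and $\beta$, for $\mu$-a.e.\ $\xi$ with $\dist(x,\xi)\ge r$, which is exactly condition \textbf{(P)}. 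I expect the packing estimate to be the main obstacle: everything hinges on first translating the hard-core repulsion into an almost-sure geometric separation of the points of $\xi$, and then converting that separation into a dimension-dependent cap $N_{\max}$ on the number of intermediate-range neighbours of $x$. Once the number of factors that can be strictly below $1$ is bounded, the lower bound $\delta$ follows at once, and the far-range factors $\ge 1$ only help (even if the full product is infinite, the estimate $\ge\delta$ is unaffected).
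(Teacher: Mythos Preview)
Your proof is correct and follows essentially the same route as the paper: case~(i) is disposed of with $\delta=1$, and in case~(ii) the hard core is turned into an almost-sure separation property, which is then converted via a ball-packing count into a uniform bound on the number of points of $\xi$ inside $\BB_{r_{\max}}(x)$, yielding $\delta=\tilde\delta^{N_{\max}}$. The only cosmetic difference is that you establish a single global $\mu$-null set (configurations with some pair closer than $r$), whereas the paper works with a localised null set $N_x$ for each $x$; your version is arguably cleaner since it produces one exceptional set valid for all $x$ simultaneously.
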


\begin{proof} 
\begin{itemize}
\item[(i)] Since $\tilde{\lambda}(x\mid \xi)=\prod_{y\in \xi} \varphi(x,y)$, condition \textbf{(P)} is trivially satisfied with $\delta=1$.
\item[(ii)]
Assume $\tilde{\delta}<1$, otherwise condition $(i)$ is satisfied. Let $\mu\in \mathcal{G}(\beta,\varphi)$. Fix a $x\in \R^d$ and consider the event
\begin{equation*}
N_{x}=\big\{\xi\in \mfn\colon \text{ There exist }  y,y'\in \xi|_{\BB_{r_{\max}}(x)}   \text{ with }  \norm{y-y'} <r\big\}.
\end{equation*}
For a bounded $\Lambda\supset \BB_{r_{\max}}(x)$ and any boundary condition $\omega\in \mfn_{\Lambda^\mathsf{c}}$ we get $\mu_{\Lambda,\omega}(N_x)=0$ and by \eqref{eq:dlr}  also $\mu(N_x)=0$.
Furthermore there exists a constant $m<\infty$ such that for all $\xi\in \mfn\setminus N_x$ we have $\xi\big(\BB_{r_{\max}}(x)\big)\le m$, and $m$ does not depend on $x$ (e.g.\ $m$ can be  chosen as the maximal number of balls with radius $r/2$ which can be placed in a ball with radius $r_{\max}+r/2$). Thus condition \textbf{(P)} is satisfied with $\delta=\tilde{\delta}^m$ for all $x\in \R^d$ and for $\mu$-a.e.\ $\xi \in \mfn$. 
\end{itemize}
\end{proof}

In \cite{PY09} there are various assumptions on $\varphi$ including our condition $(i)$ of Proposition~\ref{prop:pip} and an attraction condition. Namely, $\varphi$ is said to have an attractive tail if there exists two constants $r_a<r_a'$ such that $\varphi(x,y)>1$, whenever $r_a\le \norm{x-y} \le r_a'$.

Furthermore the authors percolation radius $R$ has to be greater than $\sqrt{2}r$. We could reduce this bound by a factor $\sqrt{2}$, but in some cases, e.g.\ for a \emph{Strauss hard core} process ($\varphi(x,y)=\one\{\norm{x-y}\ge r\}$), one would expect a lower bound on the percolation radius of $r/2$.
The main difference is however, that the percolation result of \cite{PY09} is valid only in two dimension, whereas our result holds in any dimension $d\ge2$.

A pairwise interaction process is locally stable in the following two cases. Firstly if the interaction function $\varphi(x,y)$ is bounded by one; such a process is called \emph{inhibitory}.
Secondly if it has a \emph{hard core radius} $r$, i.e.\ $\varphi(x,y)=0$ whenever $\norm{x-y}\le r$, and $\varphi(x,y)\to 1$ fast enough as  $\norm{x-y}\to \infty$.
The non-percolation result in \cite{PY09} treats only the hard core case. However, unless for the percolation result, the authors proof is quite different from ours.

\section{Proofs}
\label{sec:proofs}

%\begin{figure}[ht]
%\begin{center}
%\input{loop}
%\caption{If there is a gap between two clusters of $Z_R(\Xi)$ (grey area) one can find a separating loop (the chain of boxes), such that the distance between the points of $\Xi$ and the loop is at least $r$ (The loop does not intersects the dashed circles).}
%\end{center}
%
%\end{figure}

The main idea for the proof of percolation is based on techniques close to the contour method in lattice models. In particular, it is a modification of the arguments in \citep{PY09} and \citep{Aristoff12}. 
Let condition \textbf{(P)} be satisfied with constants $r,\delta>0$, and let $R>r$. Choose a $m\in \NN$ such that $m>\sqrt{d}/(R-r)$. Divide $\R^d$ into cubes of length $1/m$. For this sake define
\begin{equation*}
Q_z=\Big\{x\in \R^d\,:\, \norm{x-z}_{\max} \le \frac{1}{2m}\Big\},
\end{equation*}
where we use the maximum norm $\norm{x}_{\max}=\max_{i=1,\dots,d}\abs{x_i}$. Then $\{Q_z,\,z\in \frac{1}{m}\Z^d\}$ covers the whole space $\R^d$. Two cubes $Q_z$ and $Q_{z'}$ are called \emph{neighbours} if $\norm{z-z'}_{\max}=1/m$. A set $S\subset \frac{1}{m}\Z^d$ is called \emph{connected} if for each pair $\{z,z'\}\subset S$ there exists a sequence of neighbouring cubes $Q_{z_1},Q_{z_2},\dots,Q_{z_n}$ with $z=z_1$ and $z'=z_n$. The cardinality of $S$ is denoted by $\abs{S}$. 

\begin{lemma}
\label{lemma:combinatorial}
There exists a constant $c>0$ such that for all $S\subset \frac{1}{m}\Z^d$ with $\abs{S}<\infty$, there exists a $S'\subset S$ with $\abs{S'}\ge c\abs{S}$ and for each pair $z,z'\in S'$ we have $\mathrm{dist}(Q_z,Q_{z'}) \ge r$.
\end{lemma}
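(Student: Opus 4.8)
The plan is to prove this by a coloring (pigeonhole) argument on the lattice $\frac{1}{m}\Z^d$, with no greedy selection needed. The key observation is that the separation condition $\mathrm{dist}(Q_z,Q_{z'})\ge r$ is automatic as soon as the centers $z,z'$ are sufficiently far apart in the maximum norm, and such a separation can be forced by passing to a single residue class of a suitably coarse sublattice. The fraction of $S$ that survives is then at least the reciprocal of the number of classes, which is a constant depending only on $r$, $m$ and $d$ (all of which are fixed once $R$ and the constants from condition \textbf{(P)} are fixed).

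First I would fix an integer $N$ with $N\ge rm+1$, equivalently $(N-1)/m\ge r$. The relevant geometric claim is: if $\norm{z-z'}_{\max}\ge N/m$, then $\mathrm{dist}(Q_z,Q_{z'})\ge r$. To see this, pick a coordinate $i$ with $\abs{z_i-z'_i}\ge N/m$; since each cube $Q_z$ has side length $1/m$, every $x\in Q_z$ and $y\in Q_{z'}$ satisfy $\abs{x_i-y_i}\ge N/m-1/m=(N-1)/m$, whence $\norm{x-y}\ge(N-1)/m\ge r$, which gives the claim.

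Next I would partition the lattice by residues modulo the sublattice $\frac{N}{m}\Z^d$, giving exactly $N^d$ classes (the quotient being isomorphic to $\Z^d/N\Z^d$). Any two distinct points in the same class differ by a nonzero element of $\frac{N}{m}\Z^d$, hence by at least $N/m$ in some coordinate and therefore by at least $N/m$ in the maximum norm; by the previous step their cubes are then at distance at least $r$. Intersecting $S$ with each class partitions $S$ into $N^d$ pieces, so by pigeonhole at least one piece $S'$ satisfies $\abs{S'}\ge\abs{S}/N^d$, and by construction all pairwise cube-distances within $S'$ are at least $r$. Taking $c=N^{-d}$, for instance with $N=\lceil rm+1\rceil$, then proves the lemma.

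There is no serious obstacle here; the only points requiring care are quantitative. One must choose $N$ so that the guaranteed gap $(N-1)/m$ between cubes in a common class genuinely exceeds $r$, and one must confirm that the loss factor $N^{-d}$ is a bona fide constant independent of $S$. Since $r$ and $m$ are fixed throughout, $c=N^{-d}$ is a fixed positive constant, exactly as required.
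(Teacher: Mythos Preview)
Your proof is correct. The geometric claim that $\norm{z-z'}_{\max}\ge N/m$ forces $\mathrm{dist}(Q_z,Q_{z'})\ge (N-1)/m\ge r$ is verified exactly as you wrote (using that each coordinate of a point in $Q_z$ deviates from $z_i$ by at most $1/(2m)$), and the partition of $\frac{1}{m}\Z^d$ into the $N^d$ cosets of $\frac{N}{m}\Z^d$ together with pigeonhole finishes the argument.

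This is a genuinely different route from the paper. The paper proceeds by a greedy selection: it picks any $z_1\in S$, discards all lattice points within Euclidean distance $r+\sqrt{d}/m$ of $z_1$, and repeats on what remains; since each such ball can swallow at most $\alpha_d(r+3\sqrt{d}/(2m))^d m^d$ lattice points, the number of survivors is at least a constant fraction of $\abs{S}$, yielding $c=1/\bigl(2\alpha_d(r+3\sqrt{d}/(2m))^d m^d\bigr)$. Your coloring argument avoids the iterative construction and the volume-counting step entirely, and is arguably cleaner; it also gives a constant $c=N^{-d}\approx(rm+1)^{-d}$ of the same order as the paper's (in fact slightly better, by roughly a factor $2\alpha_d$), though neither proof is trying to optimize $c$. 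The greedy method has the minor advantage of working directly with Euclidean balls, which matches the distance in the statement, whereas your method routes through the max norm---but that conversion costs nothing here.
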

\begin{proof}
We can assume $\abs{S}\ge 1$ and then obviously there exists a $S'$ with cardinality one. To identify the constant $c$, consider the following inductive procedure. Choose an arbitrary point $z_1\in S$ and draw a ball around $z_1$ of radius $r+\sqrt{d}/m$. Exclude all points of $S$ which are contained in this ball and denote by $S_1$ the remaining points of $S$. Note that $\mathrm{dist}(Q_{z_1},Q_z)\ge r$ for all $z\in S_1$. Continue this procedure and define $S'=\{z_1,\dots,z_n\}$, the set of the chosen points. At each step we exclude at most $\alpha_d(r+3\sqrt{d}/(2m))^dm^d$ points. Thus
\begin{equation*}
n\ge \max\left(\left\lfloor\frac{\abs{S}}{\alpha_d(r+3\sqrt{d}/(2m))^dm^d}\right\rfloor,1\right),
\end{equation*}
where $\lfloor x\rfloor$ denotes the greatest integer less than or  equal to $x$. Choosing $c=1/(2\alpha_d(r+3\sqrt{d}/(2m))^dm^d)$ yields the claim.
\end{proof}

The following Lemma is the key ingredient for the proof of Theorem~\ref{thm:perc}.

\begin{lemma}
\label{lemma:key}
Assume $\Xi\sim \mu\in \mathcal{G}(\beta,\tilde{\lambda})$ satisfies condition {\bf(P)} with constants $r$ and $\delta$. Let $S\subset\frac{1}{m}\Z^d$ with $\abs{S}<\infty$.
Then if $\beta\ge m^d/\delta$ we have,
\begin{equation}
\label{eq:lemma_key}
\P\Big(\mathrm{dist}\big(\Xi,{\textstyle \bigcup_{z\in S}Q_z}\big)\ge r\Big)\le \left(\frac{\beta\delta}{m^d}\right)^{-c\abs{S}},
\end{equation}
where $c$ is the same constant as in Lemma~\ref{lemma:combinatorial}.
\end{lemma}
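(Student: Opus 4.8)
The plan is to estimate the probability that $\Xi$ stays at distance at least $r$ from the union $\bigcup_{z \in S} Q_z$ by exploiting condition \textbf{(P)}: if no point of $\Xi$ lies near $\bigcup_{z\in S}Q_z$, then dropping a point into any of the cubes $Q_z$ has conditional intensity at least $\beta\delta$, and such insertions are cheap. First I would reduce from the full set $S$ to a well-separated sub-collection. By Lemma~\ref{lemma:combinatorial}, there is a subset $S' \subseteq S$ with $\abs{S'} \ge c\abs{S}$ such that $\mathrm{dist}(Q_z,Q_{z'}) \ge r$ for all distinct $z,z' \in S'$. Since $\bigcup_{z\in S'} Q_z \subseteq \bigcup_{z\in S} Q_z$, the event in \eqref{eq:lemma_key} is contained in the event $\{\mathrm{dist}(\Xi,\bigcup_{z\in S'}Q_z)\ge r\}$, so it suffices to bound the latter. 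The pairwise separation of the cubes in $S'$ is precisely what will let the individual cube contributions multiply.

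Next I would set up the key conditional estimate on a single cube. Fix $z \in S'$ and work on a bounded domain $\Lambda$ containing all the cubes together with their $r$-neighbourhoods. Using the DLR equation \eqref{eq:dlr}, I condition on the configuration outside $\Lambda$ and on the portion of $\Xi$ inside $\Lambda$ away from $Q_z$, then apply the local specification \eqref{eq:loc-spez}. On the event that $\mathrm{dist}(\Xi, Q_z) \ge r$, any point $x \in Q_z$ has $\mathrm{dist}(x,\Xi)\ge r$, so condition \textbf{(P)} gives $\tilde\lambda(x\mid\xi) \ge \delta$, hence the conditional intensity satisfies $\beta\tilde\lambda(x\mid\xi)\ge\beta\delta$. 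Comparing in \eqref{eq:loc-spez} the contribution of configurations with no point in $Q_z$ against those with exactly one point there, the probability that $Q_z$ contains no point of $\Xi$ (equivalently $\mathrm{dist}(\Xi,Q_z)\ge r$ locally) is bounded above by $1/(1+\beta\delta\abs{Q_z}) \le 1/(\beta\delta\abs{Q_z}) = m^d/(\beta\delta)$, using $\abs{Q_z}=m^{-d}$. The hypothesis $\beta \ge m^d/\delta$ guarantees $\beta\delta m^{-d}\ge 1$ so this bound is nontrivial.

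Finally I would combine the single-cube bounds across $S'$. Because the cubes $\{Q_z : z\in S'\}$ are pairwise at distance at least $r$, the event $\{\mathrm{dist}(\Xi,\bigcup_{z\in S'}Q_z)\ge r\}$ forces $\mathrm{dist}(\Xi,Q_z)\ge r$ for every $z$ simultaneously, and the separation ensures that emptying one cube does not interfere with the insertion estimate in a neighbouring one. Processing the cubes of $S'$ one at a time — conditioning on everything seen so far and applying the single-cube bound $m^d/(\beta\delta)$ at each step — yields a product bound $(m^d/(\beta\delta))^{\abs{S'}} = (\beta\delta/m^d)^{-\abs{S'}} \le (\beta\delta/m^d)^{-c\abs{S}}$, the last inequality using $\abs{S'}\ge c\abs{S}$ together with $\beta\delta/m^d \ge 1$. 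I expect the main obstacle to be the bookkeeping in this iterated conditioning: one must verify that the conditional intensity lower bound from \textbf{(P)} remains valid at each step when the configuration in the already-processed cubes has been fixed to be empty, and that the $r$-separation genuinely decouples the cubes so that the estimates chain multiplicatively rather than merely additively. Making this decoupling rigorous within the DLR/local-specification formalism, rather than hand-waving about independence, is the delicate point.
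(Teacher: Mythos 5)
Your proposal is correct, and it rests on the same two pillars as the paper's proof: the reduction to a well-separated subfamily $S'$ via Lemma~\ref{lemma:combinatorial}, and the observation that condition \textbf{(P)} makes inserting a point into an $r$-isolated cube gain at least a factor $\beta\delta\abs{Q_z}=\beta\delta/m^d$ over leaving it empty. Where you diverge is in the execution. The paper fixes one large window $\Lambda$ containing all of $S$ and compares, inside the single partition function $c_{\Lambda,\omega}$, the contribution of $A_{S,r}$ with that of configurations obtained by placing one extra point in each of the $n$ cubes of $S'$ \emph{simultaneously}; the chain rule \eqref{eq:ci-def2} together with \textbf{(P)} gives the factor $(\beta\delta)^n$ in one stroke (the $r$-separation guarantees each inserted point is at distance at least $r$ from the original configuration and from the previously inserted points), and integrating over the cubes yields \eqref{eq:ineq-loc}. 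You instead process the cubes one at a time, conditioning on $\Xi|_{Q_{z_i}^{\mathsf{c}}}$ and using the one-cube local specification to get $\P\big(\Xi(Q_{z_i})=0\mid \Xi|_{Q_{z_i}^{\mathsf{c}}}\big)\le 1/(1+\beta\delta m^{-d})$. The "delicate point" you flag is in fact unproblematic: since $\mathrm{dist}(Q_{z_i},Q_{z_j})\ge r$, the open $r$-neighbourhood of $Q_{z_j}$ is disjoint from $Q_{z_i}$, so the events $\{\mathrm{dist}(\Xi,Q_{z_j})\ge r\}$ for $j\neq i$ are measurable with respect to $\Xi|_{Q_{z_i}^{\mathsf{c}}}$, the tower property telescopes exactly (no independence approximation is needed), and the emptiness of already-processed cubes only removes points and hence cannot invalidate \textbf{(P)}. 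Your route trades the paper's combinatorial factor $k!/(k-n)!$ in the partition-function comparison for $n$ applications of the DLR equation on small windows, and gives the marginally sharper per-cube bound $1/(1+\beta\delta m^{-d})$; both arguments share the same mild informality in invoking \textbf{(P)} for boundary configurations that are not themselves exactly $\mu$-distributed.
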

\begin{proof}
By Lemma~\ref{lemma:combinatorial} choose a $S'\subset S$ with $\abs{S'}\ge c\abs{S}$ and such that for each pair $z,z'\in S'$ we have $\mathrm{dist}(Q_z,Q_{z'}) \ge r$. Set $n=\abs{S'}$ and denote $S'=\{z_1,\dots,z_n\}$. 
Consider the events
\begin{align*}
A_{S,r}&=\big\{\xi\in\mfn\,:\, \mathrm{dist}\big(\xi,{\textstyle \bigcup_{z\in S}Q_z}\big) \ge r\big\} \quad \text{and} \\
B_{S,r}&=\big\{\xi\cup\{x_1,\dots,x_n\}\in \mfn \colon \xi\in A_{S,r},\, x_i\in Q_{z_i},\, \text{for } i=1,\dots,n\big\}.
\end{align*}
Choose a bounded $\Lambda\subset \R^d$, such that the distance between $S$ and the boundary of $\Lambda$ is at least $r+\sqrt{d}/m$ and fix a boundary condition $\omega\in \mfn_{\Lambda^{\mathsf{c}}}$. Then for $\mu$-a.e.\ $\xi\cup\{x_1,\dots,x_n\}\in B_{S,r}$ we have by \eqref{eq:ci-def2} and condition \textbf{(P)} that
\begin{align}
u_{\Lambda,\omega}\big(&\xi|_\Lambda\cup\{x_1,\dots,x_n\}\big)=u_{\Lambda,\omega}\big(\xi|_\Lambda\big)\lambda\big(x_1\mid\xi|_\Lambda\cup\omega\big) \times \nonumber \\ 
\lambda\big(&x_2\mid \xi|_\Lambda\cup\{x_1\}\cup\omega\big)\cdots\lambda\big(x_n\mid \xi|_\Lambda \cup\{x_1,\dots,x_{n-1}\}\cup\omega\big) \nonumber \\
\ge (&\beta\delta)^nu_{\Lambda,\omega}(\xi|_\Lambda). \label{eq:density}
\end{align}
The partition function can then be bounded from below as
\begin{align*}
c_{\Lambda,\omega} \ge& \sum_{k=0}^\infty\frac{1}{k!}\int_\Lambda\cdots \int_\Lambda \one\big\{\{x_1,\dots,x_k\}\in B_{S,r}\big\}\times \\
& \qquad u_{\Lambda,\omega}\big(\{x_1,\dots,x_k\}\big)\;dx_1\cdots dx_k\\
=& \sum_{k=n}^\infty\frac{1}{k!}\frac{k!}{(k-n)!}\int_\Lambda \cdots \int_\Lambda \one\big\{x_1\in Q_{z_1},\dots,x_n\in Q_{z_n}\big\}\times \\
&\qquad \one\big\{\{x_{n+1},\dots,x_{k}\}\in A_{S,r}\big\}
 u_{\Lambda,\omega}\big(\{x_1,\dots,x_{k}\}\big)\;dx_1\cdots dx_k\\
\ge& \Big(\frac{\beta\delta}{m^d}\Big)^n\sum_{j=0}^\infty \frac{1}{j!}\int_\Lambda\cdots\int_\Lambda \one\big\{\{x_1,\dots,x_j\}\in A_S,r\big\}\times \\
&\qquad u_{\Lambda,\omega}\big(\{x_1,\dots,x_j\}\big)\; dx_1\cdots dx_j,
\end{align*}
where $k!/(k-n)!$ is the number of possibilities to choose a set of $k-n$ elements and $n$ sets of one elements out of a set of $k$ elements, and the last inequality follows by \eqref{eq:density}, by integrating over the cubes $\{Q_{z_i},i=1,\dots,n\}$ and by the change of variable $j=k-n$. Thus, by \eqref{eq:loc-spez}
\begin{equation}
\label{eq:ineq-loc}
\mu_{\Lambda,\omega}(A_{S,r})\le \Big(\frac{\beta\delta}{m^d}\Big)^{-n}.
\end{equation}
Since \eqref{eq:ineq-loc} does not depend on $\omega$, Equation \eqref{eq:dlr} yields
\begin{equation*}
\P\Big(\mathrm{dist}\big(\Xi,{\textstyle \bigcup_{z\in S}Q_z}\big)\ge r\Big)=\P(\Xi\in A_{S,r})\le \Big(\frac{\beta\delta}{m^d}\Big)^{-n} \le \Big(\frac{\beta\delta}{m^d}\Big)^{-c\abs{S}},
\end{equation*}
where the last step follows by $n\ge c\abs{S}$.
\end{proof}

\begin{proof}[Remainder of the proof of Theorem~\ref{thm:perc}.]
We call a set $L\subset \frac{1}{m}\Z^d$ a \emph{loop} if $\abs{L}<\infty$, $L$ is connected and each cube $\{Q_z,\, z\in L\}$ has exactly two neighbours. We say $L\subset\frac{1}{m}\Z^d$ is a \emph{loop around the origin} if the origin is contained in the convex hull of $L$. Denote by $\mathcal{L}_0$  the set of all loops around the origin. 

Consider the loop $L=\{z_1,\dots,z_k\}\in \mathcal{L}_0$ such that $Q_{z_1}$ and $Q_{z_k}$ are neighbours and for $i=1,\dots,k-1$ the cubes $Q_{z_i}$ and $Q_{z_{i+1}}$ are neighbours. Since the origin is contained in the convex hull of $L$, the point $z_1$ lies necessarily in the big cube $[-k/m,k/m]^d$, which gives $(2k+1)^d$ possibilities. For $i=1,\dots,k-1$, the cube $Q_{z_{i+1}}$ is a neighbour of $Q_{z_{i}}$, thus there are at most $2^d$ possibilities to place the cube $Q_{z_{i+1}}$. We conclude, that there are at most $(2k+1)^d2^{d(k-1)}$ loops in $\mathcal{L}_0$ with length $k$.
Consider the event 
\begin{equation*}
A_k=\big\{ \xi\in \mfn\colon \text{There exists a }  L\in \mathcal{L}_0 \text{ with }  \abs{L}=k  \text{ and }  \mathrm{dist}\big(\xi,{\textstyle \bigcup_{z\in L}Q_z}\big)\ge r\big\}.
\end{equation*}
Then for $\beta \ge m^d/\delta$ Lemma~\ref{lemma:key} yields
\begin{align*}
\P(\Xi\in A_k)&=\P\Big(\bigcup_{L\in \mathcal{L}_0,\, \abs{L}=k}\big\{\mathrm{dist}\big(\Xi,{\textstyle \bigcup_{z\in L}Q_z}\big)\ge r\big\}\Big)\\
&\le \sum_{L\in \mathcal{L}_0,\, \abs{L}=k}\P\Big(\mathrm{dist}\big(\Xi,{\textstyle \bigcup_{z\in L}Q_z}\big)\ge r\Big)\le (2k+1)^d2^{d(k-1)}\Big(\frac{\beta\delta}{m^d}\Big)^{-ck}.
\end{align*}
Set $\beta_+=(2^{1/c}m)^d/\delta$ and choose a $\beta>\beta_+$. Then,
\begin{equation}
\label{eq:Borell-Cantelli}
\sum_{k=1}^\infty\P(\Xi \in A_k)\le \sum_{k=1}^\infty (2k+1)^d2^{d(k-1)}\Big(\frac{\beta\delta}{m^d}\Big)^{-ck} <\infty.           
\end{equation}
The first Borel-Cantelli Lemma together with \eqref{eq:Borell-Cantelli} then yield that there are a.s.\ only finitely many loops $L\in \mathcal{L}_0$ with $\mathrm{dist}\big(\Xi,{\textstyle \bigcup_{z\in L}Q_z}\big)\ge r$.

\begin{figure}[h]
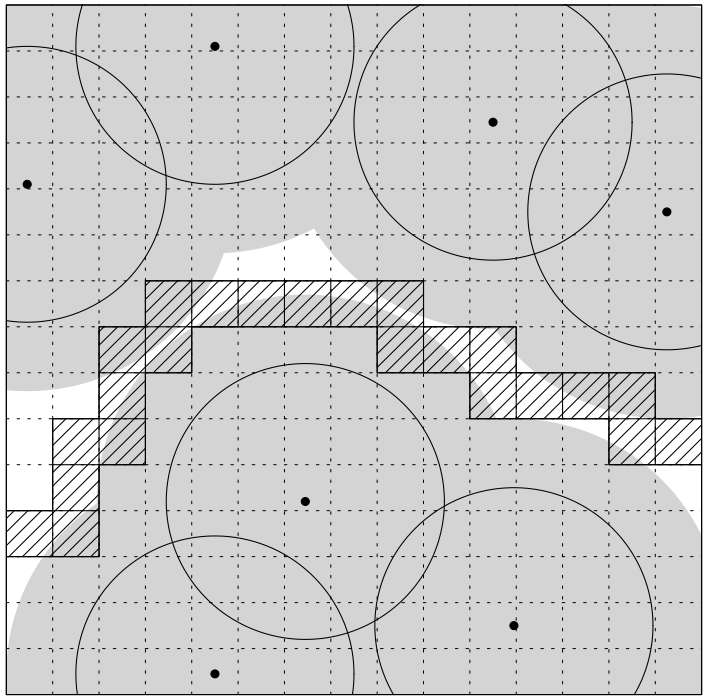

\begin{center}
%\input{loop.tex}
% Created by tikzDevice version 0.6.2-92-0ad2792 on 2013-04-23 19:59:53
% !TEX encoding = UTF-8 Unicode
% [inline block 0: 1 envs, 123921 chars -> data_tex | \begin{tikzpicture}[x=1pt,y=1pt] \definecolor[named]{fillColor}{rgb}{1.00,1.00,1.00}...]

\caption{Here we choose $d=2$, $r=0.2$, $R=0.3$ and $m=15$. The grey area corresponds to $Z_R(\Xi)$ and the chain of shaded boxes shows one possibility to separate the two components of $Z_R(\Xi)$. Note that the boxes do not intersect with the circles of radius $r$.}
\label{fig:loop}
\end{center}
\end{figure}

The following trick allows us to reduce the problem to the two-dimensional case. 
Consider the set $Z_R^{(2)}(\Xi)=Z_R(\Xi)\cap\R^2$, where we identify $\R^2$ as the plane spanned by the first two canonical basis vectors in $\R^d$. A loop is said to be a \emph{$\R^2$-loop} if the centres of its cubes lie in $\R^2$.

The parameter $m$ is chosen such that if two points $x$ and $x'$ lie in the same connected component of $\R^2\setminus Z_R^{(2)}(\Xi)$, then there exists a sequence of neighbouring cubes with centres $S=\{z_1,\dots,z_n\}\subset \frac{1}{m}\Z^d\cap\R^2$, with $x\in Q_{z_1}$, $x'\in Q_{z_n}$ and $\mathrm{dist}\big(\Xi,{\textstyle \bigcup_{z\in S}Q_z}\big)\ge r$. To see that this is possible consider a path in $\R^2\setminus Z_R^{(2)}(\Xi)$ which connects $x$ with $x'$ and choose all cubes which intersect with this path. Obviously the cubes are connected and the length of the diagonal of a cube is less than $R-r$. Thus, by the reverse triangle inequality, the selected cubes and $\Xi$ are separated by a distance at least $r$.
Figure~\ref{fig:loop} shows a graphical illustration of this procedure.

Assume that $Z_R(\Xi)$ does not percolate, i.e.\ there exists a.s.\ only finite clusters. Obviously the clusters in $Z_R^{(2)}(\Xi)$ are also finite, and each cluster can be surrounded by a $\R^2$-loop $L\in \mathcal{L}_0$ with $\mathrm{dist}\big(\Xi,{\textstyle \bigcup_{z\in L}Q_z}\big)\ge r$. For large $\beta$ this eventually leads to a contradiction.

Note that the choice of a basis in $\R^d$ is by no means important. Thus, the statement of Remark~\ref{rem:subspace}.
\end{proof}

\begin{proof}[Proof of Theorem~\ref{thm:non-perc}]
Let $\Xi$ be a locally stable Gibbs process with conditional intensity $\beta\tilde{\lambda} \le c^*$. Denote $\Lambda_n=[-n,n]^d$, for $n\in \NN$. Note that conditioned on $\Xi\vert_{ \Lambda_n^{\mathsf{c}}}=\omega$, the law of $\Xi\vert_{\Lambda_n}$ is uniquely characterized by the conditional intensity $\beta\tilde{\lambda}(\cdot\mid \cdot\cup\omega)$.

It is a known fact that every locally stable Gibbs process on a bounded domain can be obtained as a dependent random thinning of a Poisson process; see \citep[Remark 3.4]{km00}. In particular, there exists a Poisson process $\Pi_n$ with intensity $c^*$ and a Gibbs process $\tilde{\Xi}_n\sim\mathcal{G}\big(\beta\tilde{\lambda}(\cdot\mid \cdot\cup\omega)\big)$ on $\Lambda_n$, such that $\tilde{\Xi}_n\subset \Pi_n$ a.s. Let $\Xi_n$ be the point process obtained  replacing $\Xi\vert_{\Lambda_n}$ with $\tilde{\Xi}_n$, i.e.\ $\Xi_n=\Xi|_{\Lambda_n^\mathsf{c}}\cup \tilde{\Xi}_n$. Since $\Xi\vert_{\Lambda_n}$ and $\tilde{\Xi}_n$ have the same conditional distribution $\mu_{\Lambda_n,\omega}$, Equation \eqref{eq:dlr} yields that $\Xi$ and $\Xi_n$ have the same distribution for all $n\in \NN$. A standard result (\cite[Theorem~11.1.VII]{dvj08}) then yields that as $n\to \infty$ the sequence $\Xi_n$ has a weak limit $\Xi'$ with the same distribution as $\Xi$. By the same result $\Pi_n\to \Pi$ as $n\to \infty$, where $\Pi$ is a Poisson process on $\R^d$ with intensity $c^*$. Furthermore, since $\tilde{\Xi}_n\subset \Pi_n$ a.s.\ for all $n\in \NN$, we conclude $\Xi'\subset \Pi$ a.s.

Obviously $Z_R(\Xi')\subset Z_R(\Pi)$. Thus if $Z_R(\Pi)$ does not percolate, neither does $Z_R(\Xi')$, and Proposition~\ref{prop:poisson} finishes the proof.
\end{proof}

\section*{Acknowledgements}

The Author thanks Ilya Molchanov and Dominic Schuhmacher for stimulating discussions about the topic. This work was supported by SNF Grant 200021-137527 and DFG-SNF Research Group FOR 916.

\bibliographystyle{plain}

\begin{thebibliography}{10}

\bibitem{Aristoff12}
David Aristoff.
\newblock Percolation of hard disks.
\newblock {\em Preprint}, 2012.
\newblock Available at http://arxiv.org/abs/1207.2136.

\bibitem{bv95}
Adrian~J. Baddeley and Marie-Colette N.~M. van Lieshout.
\newblock Area-interaction point processes.
\newblock {\em Ann. Inst. Statist. Math.}, 47(4):601--619, 1995.

\bibitem{dvj08}
Daryl~J. Daley and David Vere-Jones.
\newblock {\em An introduction to the theory of point processes. {V}ol. {II}}.
\newblock Probability and its Applications (New York). Springer, New York,
  second edition, 2008.

\bibitem{ddg12}
David Dereudre, Remy Drouilhet, and Hans-Otto Georgii.
\newblock Existence of {G}ibbsian point processes with geometry-dependent
  interactions.
\newblock {\em Probab. Theory Related Fields}, 153(3-4):643--670, 2012.

\bibitem{georgii88}
Hans-Otto Georgii.
\newblock {\em Gibbs measures and phase transitions}, volume~9 of {\em de
  Gruyter Studies in Mathematics}.
\newblock Walter de Gruyter \& Co., Berlin, 1988.

\bibitem{grimmett99}
Geoffrey Grimmett.
\newblock {\em Percolation}, volume 321 of {\em Grundlehren der Mathematischen
  Wissenschaften [Fundamental Principles of Mathematical Sciences]}.
\newblock Springer-Verlag, Berlin, second edition, 1999.

\bibitem{jansen12}
Sabine Jansen.
\newblock Continuum percolation for {G}ibbsian point processes with attractive
  interactions.
\newblock {\em Preprint}, 2012.
\newblock Available at http://arxiv.org/abs/1208.1223.

\bibitem{km00}
Wilfrid~S. Kendall and Jesper M{\o}ller.
\newblock Perfect simulation using dominating processes on ordered spaces, with
  application to locally stable point processes.
\newblock {\em Adv. in Appl. Probab.}, 32(3):844--865, 2000.

\bibitem{mr96}
Ronald Meester and Rahul Roy.
\newblock {\em Continuum percolation}, volume 119 of {\em Cambridge Tracts in
  Mathematics}.
\newblock Cambridge University Press, Cambridge, 1996.

\bibitem{moellerwaage04}
Jesper M{\o}ller and Rasmus~P. Waagepetersen.
\newblock {\em Statistical inference and simulation for spatial point
  processes}, volume 100 of {\em Monographs on Statistics and Applied
  Probability}.
\newblock Chapman \& Hall/CRC, Boca Raton, FL, 2004.

\bibitem{muermann75}
Michael~G. M{\"u}rmann.
\newblock Equilibrium distributions of physical clusters.
\newblock {\em Comm. Math. Phys.}, 45(3):233--246, 1975.

\bibitem{nguyenzessin79}
Xuan-Xanh Nguyen and Hans Zessin.
\newblock Integral and differential characterizations of the {G}ibbs process.
\newblock {\em Math. Nachr.}, 88:105--115, 1979.

\bibitem{PY09}
Eugene Pechersky and Anatoly Yambartsev.
\newblock Percolation properties of the non-ideal gas.
\newblock {\em J. Stat. Phys.}, 137(3):501--520, 2009.

\bibitem{penrose91}
Mathew~D. Penrose.
\newblock On a continuum percolation model.
\newblock {\em Adv. in Appl. Probab.}, 23(3):536--556, 1991.

\bibitem{ruelle69}
David Ruelle.
\newblock {\em Statistical mechanics: {R}igorous results}.
\newblock W. A. Benjamin, Inc., New York--Amsterdam, 1969.

\bibitem{zessin08}
Hans Zessin.
\newblock A theorem of {M}ichael {M}\"urmann revisited.
\newblock {\em Izv. Nats. Akad. Nauk Armenii Mat.}, 43(1):69--80, 2008.

\bibitem{zs85}
Sergey~A. Zuyev and Alexander~F. Sidorenko.
\newblock Continuous models of percolation theory. {I}.
\newblock {\em Teoret. Mat. Fiz.}, 62(1):76--86, 1985.

\end{thebibliography}

\end{document}